\numberwithin{equation}{section}
                        \theoremstyle{plain}
\newcommand\no[1]{}
\newtheorem{theorem}{Theorem}[section]
\newtheorem{thm}{Theorem}
\newtheorem{lemma}[theorem]{Lemma}
\newtheorem{proposition}[theorem]{Proposition}
\newtheorem*{conjecture*}{Conjecture}
\newtheorem*{theorem*}{Theorem}
\theoremstyle{definition}
\def\BZ{\mathbb Z}
\def\be { \begin{equation} }
\def\ee { \end{equation} }
\begin{document}
\allowdisplaybreaks
\baselineskip16pt
\title{Genera and crossing numbers of $2$-bridge knots}

\begin{abstract}
In this paper, we determine the average genus of all the $2$-bridge knots with a given crossing number. 
As a consequence, we obtain the oblique asymptote of this value as the crossing number grows. 
\end{abstract}

\author{Masaaki Suzuki}
\address{Department of Frontier Media Science,
  Meiji University,
  4-21-1 Nakano, Nakano-ku, Tokyo, 164-8525, Japan}
\email{mackysuzuki@meiji.ac.jp}

\author{Anh T. Tran}
\address{Department of Mathematical Sciences,
The University of Texas at Dallas,
Richardson, TX 75080-3021, USA}
\email{att140830@utdallas.edu}

\thanks{2020 {\it Mathematics Subject Classification}.
Primary 57K31, Secondary 57K32.}

\keywords{$2$-bridge knots, genus, crossing number}

\maketitle

\section{Introduction}

Dunfield provided experimental data in \cite{dunfield} 
which suggests that the genus of any knot grows linearly with respect to its crossing number. 
It is known that the family of $2$-bridge knots is a fundamental class of knots. 
Many knot properties are first studied for $2$-bridge knots before for the general case. 

Let  $\overline{g}_{c}$ be the average genus of all the $2$-bridge knots with $c$ crossings. 
Motivated by Dunfield's result, Cohen in \cite{cohen} \cite{cohen2} gave lower and upper bounds on $\overline{g}_{c}$ in terms of $c$. 
In this paper, we will determine the explicit value of $\overline{g}_{c}$. 
As a consequence, we can obtain the oblique asymptote of $\overline{g}_{c}$. 
That is to say, the following is the main theorem of this paper. 

\begin{thm}\label{mainthm}
For $c \geq 3$, we have
\[
\overline{g}_{c} =
\left\{
\begin{array}{ll}
\displaystyle{\frac{1}{4} c + \frac{1}{12} +  \frac{c-5}{2^c - 4}} & c \equiv 0 \pmod{2} 
\smallskip \\
\displaystyle{\frac{1}{4} c + \frac{1}{12} +  \frac{1}{3 \cdot 2^{(c-3)/2}}} & c \equiv 1 \pmod{4} 
\smallskip\\
\displaystyle{\frac{1}{4} c + \frac{1}{12} +  \frac{2^{(c+1)/2} - 3 c + 11}{12 \left(2^{c-3} + 2^{(c-3)/2} + 1\right)}} & c \equiv 3 \pmod{4}
\end{array}
\right. .
\]
In particular, $\overline{g}_c \sim \displaystyle{\frac{1}{4} c + \frac{1}{12}}$ as $c \to \infty$. 
\end{thm}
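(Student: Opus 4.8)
The plan is to compute the average genus by enumerating all $2$-bridge knots with a given crossing number $c$ and summing their genera, then dividing by the count. I would first recall the standard combinatorial parametrization: every $2$-bridge knot is $\mathfrak{b}(p,q)$ with $p$ odd, $0 < q < p$, $\gcd(p,q)=1$, and two such knots $\mathfrak{b}(p,q)$ and $\mathfrak{b}(p',q')$ coincide exactly when $p=p'$ and $q' \equiv q^{\pm 1} \pmod{p}$. The crossing number and the genus both read off cleanly from the \emph{continued fraction expansion} of $q/p$: writing $q/p = [a_1, a_2, \dots, a_n]$ with all $a_i$ positive (or in the all-even expansion), the crossing number equals $\sum a_i$ and the Seifert genus of a $2$-bridge knot equals half the length of the even continued fraction expansion, so the genus is governed by how the partition of $c$ into parts is shaped. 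The key translation step is therefore to encode ``$2$-bridge knots with crossing number $c$'' as equivalence classes of compositions of $c$ under the palindrome-type symmetry $q \leftrightarrow q^{-1}$.

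Next I would set up the two generating-function or direct-counting problems: let $N_c$ be the number of $2$-bridge knots with $c$ crossings and let $G_c = \sum g(K)$ be the total genus over this family, so that $\overline{g}_c = G_c / N_c$. The count $N_c$ is classical and has a known closed form depending on $c \bmod 4$ (essentially $(2^{c-3} + \text{correction})$, with the correction coming from the fixed points of the involution $q \mapsto q^{-1}$, i.e.\ the amphichiral/palindromic continued fractions). The harder quantity is $G_c$. I would compute it by summing the genus statistic over all admissible continued fractions of total sum $c$, organizing the sum by the number of parts. Because genus is (up to the factor $\tfrac12$) the length of the even expansion, this reduces to computing the average number of parts in compositions of $c$ into positive even-convertible parts, weighted correctly and then symmetrized under the $q \leftrightarrow q^{-1}$ palindrome action. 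The three residue classes $c \equiv 0, 1, 3 \pmod 4$ arise precisely from the differing behavior of the palindromic fixed-point set in these cases (for instance there are no $2$-bridge knots when $c \equiv 2 \pmod 4$ is not the relevant obstruction, but the count of self-dual continued fractions differs), which is why the final formula splits into cases.

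The main obstacle I expect is the \textbf{exact treatment of the symmetry} $q \equiv q^{-1} \pmod p$: naively summing over all compositions double-counts every knot except the palindromic ones, so by Burnside / orbit-counting I must separately evaluate both the total genus over \emph{all} compositions and the total genus over the \emph{fixed} (palindromic) compositions, then combine as $G_c = \tfrac12\bigl(G_c^{\text{all}} + G_c^{\text{fix}}\bigr)$ and similarly $N_c = \tfrac12\bigl(N_c^{\text{all}} + N_c^{\text{fix}}\bigr)$. The fixed-point contributions are where the delicate residue-dependent terms $2^{(c-3)/2}$, $2^{(c+1)/2}$, etc.\ enter, and getting their genus-weighted sum right is the crux. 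The unweighted palindrome count is standard, but I must redo the computation tracking the length statistic, which requires a careful generating-function bookkeeping of even-length versus odd-length palindromic continued fractions. Once both $G_c^{\text{all}}$ and $G_c^{\text{fix}}$ are in closed form, forming the quotient $\overline{g}_c = G_c/N_c$ and simplifying yields the stated three-case formula; the asymptotic $\overline{g}_c \sim \tfrac14 c + \tfrac{1}{12}$ then follows immediately since every correction term is $O(c \cdot 2^{-c/2})$ and hence vanishes as $c \to \infty$.
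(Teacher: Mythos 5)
Your overall strategy---parametrize $2$-bridge knots by continued fractions, do a genus-weighted orbit count under the involution $q\mapsto q^{-1}$ (so $G_c=\tfrac12(G_c^{\text{all}}+G_c^{\text{fix}})$), and divide by the Ernst--Sumners count---is exactly the skeleton of the paper's argument, and the observation that the residue-class split comes from the palindromic fixed points is correct. But there is a concrete gap at the translation step, and it is the step on which the whole computation hinges. You anchor the crossing number to the positive expansion ($c=\sum a_i$) and the genus to the all-even expansion (half its length). These are two \emph{different} expansions of the same rational number, and your proposal gives no mechanism for tracking both statistics simultaneously; the phrase ``compositions of $c$ into positive even-convertible parts, weighted correctly'' leaves the weight undefined, and without it the sum $G_c^{\text{all}}$ cannot even be written down. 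The paper's device is to work entirely in the signed all-even expansion $[2a_1,\dots,2a_{2m}]$, where the genus is $m$ and the crossing number is given by $c=\sum_{i=1}^{2m}2|a_i|-\ell$ with $\ell$ the number of sign changes. This identity converts the problem into an explicit double count: choose a composition $(b_1,\dots,b_{2m})$ of $(c+\ell)/2$ into $2m$ positive parts ($\binom{(c+\ell)/2-1}{2m-1}$ ways), then choose signs realizing exactly $\ell$ sign changes ($2\binom{2m-1}{\ell}$ ways), then quotient by the symmetry. Some such identity is indispensable, and it is the one genuinely knot-theoretic input beyond the classification; your plan never supplies it or an equivalent.

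Two further points where the plan underestimates the work. First, even after the setup above, the closed forms require evaluating sums of the shape $\sum_{q}2^q\binom{2n-1-q}{q}$ and $\sum_{q}q\,2^q\binom{2n-1-q}{q}$; these satisfy a three-term recurrence and are not among the standard binomial identities, so ``careful generating-function bookkeeping'' is hiding a real lemma. Second, the fixed-point analysis is more delicate than a single Burnside correction: one must distinguish whether the absolute-value sequence $(b_1,\dots,b_{2m})$ is a palindrome and, separately, whether the signed sequence is, and these possibilities interact with the parity of $\ell$ (for instance, a signed even-length sequence with an odd number of sign changes is never symmetric). Getting the cardinality of each symmetry class right is precisely where the terms $2^{(c\pm1)/2}$ and the constants $8$, $24$ in the numerators come from, so this cannot be waved through.
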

This result supports Dunfield's observation affirmatively. 
Cohen and Lowrance \cite{cohen3} obtained the same asymptote independently. 

\section{Preliminary}

In this section, we review some known facts about $2$-bridge knots from \cite{murasugi}, \cite{cromwell}, 
and show some fundamental formulas about binomial coefficients. 

It is known that a $2$-bridge knot corresponds to a rational number.  
We denote by $K([a_1,a_2, \ldots, a_n])$ the $2$-bridge knot determined by a continued fraction 
\[
[a_1, a_2, \ldots, a_n] = 
\frac{1}{a_1 + \frac{1}{a_2 + \frac{1}{\ddots \frac{1}{a_n}}}} . 
\]
The genus of the $2$-bridge knot corresponding to a rational number is a half of the length of the even continued fraction. 
Namely, if we take the even continued fraction $[2 a_1, 2 a_2, \ldots, 2 a_{2m}]$ of a rational number where $a_i \neq 0$, 
then the genus of $K = K([2 a_1,2 a_2, \ldots, 2 a_{2m}])$ is $m$. 
Remark that the lengths of the even continued fractions for $2$-bridge knots are always even. 
Moreover, the crossing number $c$ of $K$ is given by 
\[
c = \left(\sum_{i=1}^{2m} 2|a_i| \right) - \ell 
\]
where $\ell$ is the number of sign changes in the sequence $(2 a_1, 2 a_2, \ldots, 2 a_{2m})$ (See \cite{SZK1}, \cite{suzukitran}, and \cite{SZK2}). 
Since $|a_i| \ge 1$ and $0 \le \ell \le 2m-1$, 
we have $4m \le \sum_{i=1}^{2m} 2|a_i| =c + \ell \le c + (2m-1)$. 
This implies that $2m \le c-1$ and then $0 \le \ell \le 2m-1 \le c-2$. 
Since $\sum_{i=1}^{2m} 2|a_i|$ is even, $\ell$ and $c$ have the same parity. 
For each $\mathbf{b}=(b_1, b_2, \dots, b_{2m})$ where $b_i \geq 1$,  
let $S_{\mathbf{b},\ell} =S_{(b_1, b_2, \dots, b_{2m}),\ell}$ be the set of all the $2$-bridge knots $K([2a_1, 2a_2, \dots, 2a_{2m}])$ 
such that $a_i = \pm b_i$ and that the number of sign changes in the sequence $(a_1, a_2, \dots, a_{2m})$ is exactly $\ell$. 
For example, we have 
\[
S_{(1,1),0} = \{ K([2,2] \}, \quad S_{(1,1),1} = \{ K([2,-2], K([-2,2]) \} . 
\]

Note that $K([2a_1, 2a_2, \dots, 2a_{2m}])$ and $K([-2a_{2m}, -2a_{2m-1}, \dots, -2a_{1}])$ represent the same $2$-bridge knot. 
Moreover, $K([-2a_1, -2a_2, \dots, -2a_{2m}])=K([2a_{2m}, 2a_{2m-1}, \dots, 2a_{1}])$ 
is the mirror image of $K([2a_1, 2a_2, \dots, 2a_{2m}]) = K([-2a_{2m}, -2a_{2m-1}, \dots, -2a_{1}])$. 
These two knots are distinct if and only if  $(2a_1, 2a_2, \dots, 2a_{2m})$ is not symmetric. 

In \cite{ernstsumners}, Ernst and Sumners determined the number of $2$-bridge knots with respect to crossing number. 

\begin{theorem}[Ernst-Sumners \cite{ernstsumners}]\label{thmernstsumners}
For $c \geq 3$, the number of all the $2$-bridge knots with $c$ crossings is given by 
\[
TK(c) = 
\left\{
\begin{array}{ll}
\displaystyle{\frac{1}{3} (2^{c-2} - 1)} & c \equiv 0 \pmod{2} 
\smallskip\\
\displaystyle{\frac{1}{3} (2^{c-2} + 2^{(c-1)/2})} & c \equiv 1 \pmod{4} 
\smallskip\\ 
\displaystyle{\frac{1}{3} (2^{c-2} + 2^{(c-1)/2} + 2)} & c \equiv 3 \pmod{4} 
\end{array}
\right. .
\]
\end{theorem}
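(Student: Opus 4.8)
The plan is to convert the enumeration into a purely combinatorial count of continued-fraction vectors and then pass to the quotient by Schubert's equivalence. First I would invoke Schubert's classification (see \cite{cromwell}): a $2$-bridge link is $\mathfrak b(p,q)$ with $0<q<p$, $\gcd(p,q)=1$, two such being equivalent precisely when $q'\equiv q^{\pm1}\pmod p$, and it is a knot exactly when $p$ is odd. Expanding $p/q=[a_1,\dots,a_n]$ as a \emph{positive} continued fraction ($a_i\ge 1$) produces a reduced alternating diagram, so by the Tait conjectures its crossing number equals $\sum_{i=1}^n a_i$. After fixing the normalization $a_n\ge 2$ to make the expansion unique, the $2$-bridge links with $c$ crossings are in bijection with compositions $(a_1,\dots,a_n)$ of $c$ with $a_n\ge 2$ (there are $2^{c-2}$ of these), modulo the reversal $(a_1,\dots,a_n)\mapsto(a_n,\dots,a_1)$ induced by $q\mapsto q^{-1}$.

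Next I would isolate the knots and extract the main term. Writing $P_k=K(a_1,\dots,a_k)$ for the prefix continuants, the recursion $P_k=a_kP_{k-1}+P_{k-2}$ gives $\gcd(P_k,P_{k-1})=\gcd(P_1,P_0)=1$, so consecutive continuants are never both even and the parity pair $(P_{k-1},P_k)\bmod 2$ ranges over the three states $(0,1),(1,0),(1,1)$. A part $a$ acts on this state by a shift, $(u,v)\mapsto(v,u)$ when $a$ is even and $(u,v)\mapsto(v,u+v)$ when $a$ is odd, which I would encode in a $3\times 3$ transfer matrix with the even and odd parts weighted by $x^2/(1-x^2)$ and $x/(1-x^2)$. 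The knot condition ``$p=P_n$ odd'' restricts to the two states with $v=1$ out of the three; a short computation with the Perron eigenvector of the dominant eigenvalue $2$ shows the resulting main term is $\tfrac23\cdot 2^{c-2}$, and halving by the reversal involution yields $\tfrac13\,2^{c-2}$. This already explains the factor $\tfrac13$ (the three coprimality-constrained parity states, two of which are knots) and the leading asymptotics.

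The correction terms I would obtain from the exact transfer-matrix evaluation together with a Burnside count for reversal. The subdominant eigenvalues have modulus $\sqrt2$, which is exactly what produces the $2^{(c-1)/2}$-type contributions in the odd-$c$ formulas; and the number of orbits under reversal is $\tfrac12(\#A+\#\mathrm{Fix})$, where $\#A$ is the exact knot count above and $\mathrm{Fix}$ consists of the palindromic vectors with odd continuant. A palindrome of $c$ is determined by its first half, so $\#\mathrm{Fix}$ grows like a power of $2^{1/2}$; its precise value depends on the parity of the middle entry and hence on $c\bmod 4$, and it is this dependence that splits the answer into the three stated cases and pins down the constants $-1$ and $+2$. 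As a sanity check one verifies $TK(3)=2$ (the two trefoils), $TK(4)=1$ (the amphichiral figure-eight), $TK(5)=4$ and $TK(6)=5$, confirming that mirror images are counted separately except when the knot is amphichiral, i.e.\ a palindromic fixed point.

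The main obstacle is this last step. The reversal involution does not respect the normalization $a_n\ge 2$, so the bijection between compositions and fractions must be reconciled with the symmetry before Burnside can be applied cleanly, and one must determine exactly which palindromic vectors have \emph{odd} continuant in each residue class $c\equiv 0,1,3\pmod 4$. This careful fixed-point bookkeeping — rather than the main-term estimate — is where the three-case structure and the exact lower-order terms are won or lost.
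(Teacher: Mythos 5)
The paper does not prove this statement; it is quoted from Ernst--Sumners, so there is no internal argument to compare your attempt against. Your outline does follow the strategy of the original proof (Schubert normal form, positive continued fractions realizing the crossing number via the Tait conjectures, and a Burnside count over the reversal involution), and the main term $\tfrac13\,2^{c-2}$ and your numerical sanity checks are right. But the proposal has a genuine gap, and in the one place where you commit to a description of the decisive step, the description is wrong.

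The error is in the fixed-point analysis. If $p/q=[a_1,\dots,a_n]$, then $[a_n,\dots,a_1]=p/q'$ with $qq'\equiv(-1)^{n+1}\pmod p$. Hence reversal realizes $q\mapsto q^{-1}$ only for odd $n$; for even $n$ it realizes $q\mapsto -q^{-1}$, which under Schubert's classification is the \emph{mirror} of the inverse. Since $TK(c)$ counts mirror images as distinct knots, the fixed points of the counting involution are the fractions with $q^2\equiv 1\pmod p$, i.e.\ those whose \emph{odd-length} expansion is palindromic --- not ``the palindromic vectors with odd continuant.'' Concretely, $[2,2]=5/2$ is a palindrome with odd continuant, yet $2^{-1}\equiv 3\pmod 5$, so $5/2$ and $5/3$ form a two-element orbit (that the figure-eight is amphichiral is irrelevant to this count); conversely $15/11=[1,2,1,3]$ is a genuine fixed point ($11^2\equiv 1\pmod{15}$) whose canonical expansion is not palindromic, though its odd-length expansion $[1,2,1,2,1]$ is. Relatedly, attributing the $2^{(c-1)/2}$ terms to subdominant eigenvalues of modulus $\sqrt2$ in the transfer matrix is incorrect: the number of fractions with odd continuant and crossing number $c$ is exactly $\tfrac13\left(2^{c-1}-2(-1)^c\right)$ (subdominant eigenvalue $-1$; check $c=3,4,5$ giving $2,2,6$), so \emph{all} of the $2^{(c-1)/2}$ contribution, and the entire three-case structure mod $4$, comes from the palindrome count that you leave undone. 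Until you count the odd-length palindromic compositions of $c$ with odd continuant in each residue class --- which is essentially the whole content of the lower-order terms $-1$, $0$, $+2$ --- the proposal establishes only the leading term.
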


In this paper, we make use of several identities on binomial coefficients in order to prove Theorem \ref{mainthm}. 
In addition to well-known formulas:
\[
\binom{a}{b} \binom{b}{c} = \binom{a}{c} \binom{a-c}{b-c}, \quad 
\sum_{q=0}^{n}  \binom{n}{q} = 2^n, \quad 
\sum_{q=0}^{\lfloor n/2 \rfloor}  \binom{n}{2q} = 2^{n-1} , \quad 
\sum_{q=0}^{n}  q \binom{n}{q} = n 2^{n-1}, 
\]
we prepare the following.  

\begin{lemma}\label{lem22} 
We have
$$
\sum_{q=0}^{n-1}  x^{q} \binom{2n-1-q}{q} = \frac{\left( \frac{2x+1+ \sqrt{4x+1}}{2} \right)^n - \left( \frac{2x+1- \sqrt{4x+1}}{2} \right)^n}{\sqrt{4x+1}},
$$
and
$$
\sum_{q=0}^{n}  x^{q} \binom{2n-q}{q} = \frac{ \frac{\sqrt{4x+1} +1}{2} \left( \frac{2x+1+ \sqrt{4x+1}}{2} \right)^n + \frac{\sqrt{4x+1} -1}{2}  \left( \frac{2x+1- \sqrt{4x+1}}{2} \right)^n}{\sqrt{4x+1}}.
$$
In particular, it follows that 
$$
\sum_{q=0}^{n-1}  2^{q} \binom{2n-1-q}{q} = \frac{4^n-1}{3} \qquad \text{and} \qquad \sum_{q=0}^{n}  2^{q} \binom{2n-q}{q} = \frac{2\cdot 4^n + 1}{3}.
$$
\end{lemma}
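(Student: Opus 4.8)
The plan is to recognize both sums as the odd- and even-indexed terms of a single sequence of Fibonacci-type polynomials and then solve the resulting linear recurrence. Concretely, set $F_m := \sum_{q \geq 0} x^q \binom{m-q}{q}$, where only finitely many terms are nonzero since $\binom{m-q}{q} = 0$ once $q > m-q$. The binomial coefficients one would add to extend the summation ranges in the statement all vanish, so $A_n := \sum_{q=0}^{n-1} x^q \binom{2n-1-q}{q} = F_{2n-1}$ and $B_n := \sum_{q=0}^{n} x^q \binom{2n-q}{q} = F_{2n}$, reducing both identities to understanding one sequence $F_m$.

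First I would establish the recurrence $F_m = F_{m-1} + x F_{m-2}$ for $m \geq 2$, together with $F_0 = F_1 = 1$. This follows from Pascal's identity $\binom{m-q}{q} = \binom{(m-1)-q}{q} + \binom{(m-1)-q}{q-1}$: summing against $x^q$ splits $F_m$ into $\sum_q x^q \binom{(m-1)-q}{q} = F_{m-1}$ and $\sum_q x^q \binom{(m-1)-q}{q-1}$, and shifting the index by one in the latter turns it into $x\sum_q x^q \binom{(m-2)-q}{q} = x F_{m-2}$.

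Next I would solve this recurrence in closed form. Its characteristic equation is $t^2 - t - x = 0$, with roots $\phi, \psi = \tfrac{1 \pm \sqrt{4x+1}}{2}$, so that $\phi - \psi = \sqrt{4x+1}$ and $\phi + \psi = 1$. Matching the initial data $F_0 = F_1 = 1$ gives $F_m = \frac{\phi^{m+1} - \psi^{m+1}}{\phi - \psi}$. The key observation is that $\phi^2 = \tfrac{2x+1+\sqrt{4x+1}}{2}$ and $\psi^2 = \tfrac{2x+1-\sqrt{4x+1}}{2}$ are exactly the bases $\alpha$ and $\beta$ appearing in the statement. Then $A_n = F_{2n-1} = \frac{\alpha^n - \beta^n}{\sqrt{4x+1}}$, which is the first identity, while $B_n = F_{2n} = \frac{\phi\,\alpha^n - \psi\,\beta^n}{\sqrt{4x+1}}$; substituting $\phi = \tfrac{\sqrt{4x+1}+1}{2}$ and $-\psi = \tfrac{\sqrt{4x+1}-1}{2}$ yields the second identity verbatim. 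For the specialization I would set $x = 2$, so that $\sqrt{4x+1} = 3$, $\alpha = 4$, and $\beta = 1$, whereupon the two formulas collapse to $\frac{4^n - 1}{3}$ and $\frac{2\cdot 4^n + 1}{3}$.

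The only delicate point is the derivation of the recurrence: one must check that the boundary terms created by Pascal's identity and by the index shift genuinely vanish (for instance, the lowest term of the shifted sum involves $\binom{m-1}{-1} = 0$), so that no correction terms survive and the clean two-term recurrence holds for all $m \geq 2$. Everything after that is the standard solution of a constant-coefficient linear recurrence, and the verification of the initial conditions and of the case $x = 2$ is immediate.
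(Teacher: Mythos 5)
Your proof is correct, and it is a cleaner packaging of essentially the same idea as the paper's, differing in the decomposition. The paper keeps the two sums separate as $\alpha_n$ and $\beta_n$, uses Pascal's identity to derive the coupled relations $\beta_n = \alpha_{n+1} - x\alpha_n$ and $\alpha_n = \beta_n - x\beta_{n-1}$, and eliminates $\beta$ to obtain the second-order recurrence $\alpha_{n+1} - (2x+1)\alpha_n + x^2\alpha_{n-1} = 0$, whose characteristic roots are exactly the bases $\tfrac{2x+1\pm\sqrt{4x+1}}{2}$ in the statement. You instead embed both sums into the single Fibonacci-polynomial sequence $F_m = \sum_q x^q\binom{m-q}{q}$ with the simpler recurrence $F_m = F_{m-1} + xF_{m-2}$; in your notation the paper's two coupled relations are precisely the odd- and even-index instances of this recurrence, and the paper's characteristic roots are the squares of yours, $\phi^2$ and $\psi^2$. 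What your route buys is economy: one recurrence, one set of initial conditions $F_0=F_1=1$, and both target identities read off at once from $F_{2n-1}$ and $F_{2n}$, with the slightly awkward paper initial condition $\alpha_0=0$ absorbed into $F_{-1}$ never being needed. All the details check out: the boundary terms $\binom{m-1}{-1}=0$ do vanish, $\phi\psi = -x$ and $\phi+\psi=1$ give the correct closed form $F_m = (\phi^{m+1}-\psi^{m+1})/(\phi-\psi)$, and the specialization $x=2$, $\sqrt{4x+1}=3$, $\alpha=4$, $\beta=1$ yields $\tfrac{4^n-1}{3}$ and $\tfrac{2\cdot 4^n+1}{3}$ as claimed.
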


\begin{proof}
Let $\alpha_n = \sum_{q=0}^{n-1}  x^{q} \binom{2n-1-q}{q}$ and $\beta_n = \sum_{q=0}^{n}  x^{q} \binom{2n-q}{q}$. Then
\begin{eqnarray*}
\alpha_{n+1} - \beta_n &=& \sum_{q=0}^{n}  x^{q} \left[ \binom{2n+1-q}{q} - \binom{2n-q}{q}\right]\\
&=&  \sum_{q=1}^{n}  x^{q} \binom{2n-q}{q-1} \\
&=& x\sum_{q=1}^{n}  x^{q-1} \binom{2n-1-(q-1)}{q-1} \\
 &=& x \sum_{q'=0}^{n-1}  x^{q'} \binom{2n-1-q'}{q'} \\
  &=& x\alpha_n.
\end{eqnarray*}
This implies that $\beta_n =  \alpha_{n+1} - x \alpha_n$.

Similarly 
\begin{eqnarray*}
\beta_n - \alpha_{n} &=& \sum_{q=0}^{n}  x^{q} \left[ \binom{2n-q}{q} - \binom{2n-1-q}{q} \right]\\
&=&  \sum_{q=1}^{n}  x^{q} \binom{2n-1-q}{q-1} \\
&=& x \sum_{q=1}^{n}  x^{q-1} \binom{2n-2-(q-1)}{q-1} \\
 &=& x \sum_{q'=0}^{n-1}  x^{q'} \binom{2n-2-q'}{q'} \\
  &=& x \beta_{n-1}.
\end{eqnarray*}
This implies that
$$
\alpha_n = \beta_n - x \beta_{n-1} = (\alpha_{n+1} - x \alpha_n) - x(\alpha_{n} - x \alpha_{n-1}) = \alpha_{n+1}  - 2x \alpha_{n} + x^2 \alpha_{n-1}.
$$
Hence $\alpha_{n+1}  - (2x+1) \alpha_{n} + x^2 \alpha_{n-1}=0$. Since $\alpha_0=0$ and $\alpha_1=1$, we obtain
$$
\alpha_n = \frac{\left( \frac{2x+1+ \sqrt{4x+1}}{2} \right)^n - \left( \frac{2x+1- \sqrt{4x+1}}{2} \right)^n}{\sqrt{4x+1}}.
$$
Then 
\begin{eqnarray*}
\beta_n &=&  \alpha_{n+1} - x \alpha_n \\
&=& \frac{\left( \frac{2x+1+ \sqrt{4x+1}}{2} \right)^{n+1} - \left( \frac{2x+1- \sqrt{4x+1}}{2} \right)^{n+1}}{\sqrt{4x+1}} - x \frac{\left( \frac{2x+1+ \sqrt{4x+1}}{2} \right)^n - \left( \frac{2x+1- \sqrt{4x+1}}{2} \right)^n}{\sqrt{4x+1}} \\
&=& \frac{ \frac{\sqrt{4x+1} +1}{2} \left( \frac{2x+1+ \sqrt{4x+1}}{2} \right)^n + \frac{\sqrt{4x+1} -1}{2}  \left( \frac{2x+1- \sqrt{4x+1}}{2} \right)^n}{\sqrt{4x+1}}.
\end{eqnarray*}
\end{proof}

By using the above lemma, we obtain the following identities, which play an important role in the proof of Theorem \ref{mainthm}. 

\begin{lemma}\label{lem23}
We have 
\begin{eqnarray*}
\sum_{q=0}^{n-1}  q \, 2^{q} \binom{2n-1-q}{q}  &=& \frac{2}{27} \left( (4^n-1) (3 n- 2)- 3n \right), \\
\sum_{q=0}^{n}  q \, 2^{q} \binom{2n-q}{q}  &=& \frac{2}{27} \left( (4^n-1)  (6n -1)+ 12n \right).
\end{eqnarray*}
\end{lemma}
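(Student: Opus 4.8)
The plan is to derive both identities by applying the differential operator $x\,\frac{d}{dx}$ to the two closed forms of Lemma \ref{lem22} and then specializing to $x=2$. The key observation is the standard operator identity
$$
\sum_{q} q\,x^{q}\binom{\cdot}{q} = x\frac{d}{dx}\left(\sum_{q} x^{q}\binom{\cdot}{q}\right),
$$
so that, writing $\alpha_n(x)=\sum_{q=0}^{n-1}x^q\binom{2n-1-q}{q}$ and $\beta_n(x)=\sum_{q=0}^{n}x^q\binom{2n-q}{q}$ exactly as in the proof of Lemma \ref{lem22}, the two sums to be evaluated are precisely $x\,\alpha_n'(x)$ and $x\,\beta_n'(x)$ at $x=2$. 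Since each of $\alpha_n,\beta_n$ is a polynomial in $x$ that agrees with its closed form on an interval, I may differentiate the closed forms termwise and the resulting values at $x=2$ are the desired sums.

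To set up the differentiation, write $s=\sqrt{4x+1}$ and $\lambda_{\pm}=\tfrac{2x+1\pm s}{2}$, so the closed forms read $\alpha_n=(\lambda_+^n-\lambda_-^n)/s$ and $\beta_n=\bigl(\tfrac{s+1}{2}\lambda_+^n+\tfrac{s-1}{2}\lambda_-^n\bigr)/s$. From $s'=2/s$ one obtains $\lambda_\pm'=1\pm 1/s$. The crucial simplification comes from the specialization $x=2$, at which $s=3$ and $\lambda_+=4,\ \lambda_-=1$ (hence $\lambda_+'=4/3,\ \lambda_-'=2/3$); these clean values are what cause the final expressions to collapse to the stated closed forms, and they are consistent with $\alpha_n(2)=\tfrac{4^n-1}{3}$ and $\beta_n(2)=\tfrac{2\cdot 4^n+1}{3}$ already recorded in Lemma \ref{lem22}.

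First I would differentiate $\alpha_n=(\lambda_+^n-\lambda_-^n)/s$ by the quotient rule, multiply by $x=2$, and substitute the values above; the numerator reduces to $\bigl((3n-2)4^n-6n+2\bigr)/27$, which is exactly $\tfrac{2}{27}\bigl((4^n-1)(3n-2)-3n\bigr)$ after expanding. For the second identity I would treat $\beta_n$ as $\bigl((s+1)\lambda_+^n+(s-1)\lambda_-^n\bigr)/(2s)$; here the derivative of the numerator factors neatly as $(2+4n)(4^n+1)/3$ at $x=2$, and combining it with the remaining quotient-rule terms yields $\bigl((12n-2)4^n+12n+2\bigr)/27$, matching $\tfrac{2}{27}\bigl((4^n-1)(6n-1)+12n\bigr)$. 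The only genuine obstacle is the bookkeeping in these differentiate-and-specialize computations: one must carefully track the $s'$ and $\lambda_\pm'$ contributions in the quotient rule and recognize the fortunate factorizations at $x=2$ that make everything reduce to the advertised form. As a safety check, evaluating both formulas at $n=1$ (giving $0$ and $2$) and at $n=2$ (giving $4$ and $14$) against the sums directly confirms the normalizing constants.
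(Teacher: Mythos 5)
Your proposal is correct and is essentially the paper's own argument: the authors likewise differentiate the closed forms of Lemma \ref{lem22} with respect to $x$ and evaluate at $x=2$ (where $a=4$, $b=1$, $\sqrt{4x+1}=3$), the only negligible difference being that they obtain $\beta_n'$ from the relation $\beta_n'=\alpha_{n+1}'-x\alpha_n'-\alpha_n$ rather than by differentiating the closed form of $\beta_n$ directly. One cosmetic slip in your write-up: the quantity $\bigl((3n-2)4^n-6n+2\bigr)/27$ is $\alpha_n'(2)$ itself, i.e.\ \emph{half} of $\tfrac{2}{27}\bigl((4^n-1)(3n-2)-3n\bigr)$, the missing factor $2$ being exactly the multiplication by $x=2$, so the final identities (confirmed by your checks at $n=1,2$) are unaffected.
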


\begin{proof}
Let $a= \frac{2x+1+ \sqrt{4x+1}}{2}$ and $b= \frac{2x+1- \sqrt{4x+1}}{2}$.  
Then their derivatives with respect to $x$ are $a' = 1 + \frac{1}{\sqrt{4x+1}}$ and $b' = 1 - \frac{1}{\sqrt{4x+1}}$. 
Since $\alpha_n =  \frac{a^n-b^n}{a-b}$, the derivative of $\alpha_n$ with respect to $x$ is given by  
\begin{eqnarray*}
\alpha'_n &=& \frac{n(a^{n-1} a' - b^{n-1} b')(a-b) - (a^n - b^n)(a' - b')}{(a-b)^2} \\
&=& \frac{n(a^{n-1}  - b^{n-1} + (a^{n-1} + b^{n-1}) \frac{1}{\sqrt{4x+1}})\sqrt{4x+1}- (a^n - b^n) \frac{2}{\sqrt{4x+1}}}{4x+1} \\
&=& \frac{n((a^{n-1}  - b^{n-1}) \sqrt{4x+1} + a^{n-1} + b^{n-1})- (a^n - b^n) \frac{2}{\sqrt{4x+1}}}{4x+1}.
\end{eqnarray*}
Hence 
\begin{eqnarray*}
\alpha'_n |_{x=2} &=& \frac{n[3(4^{n-1}  - 1)  + 4^{n-1} + 1]- (4^n - 1) \frac{2}{3}}{9} \\
&=& \frac{1}{27}((4^n-1) (3n - 2) - 3 n).
\end{eqnarray*}
By $\sum_{q=0}^{n-1}  q \, 2^{q} \binom{2n-1-q}{q} = 2 \alpha'_n |_{x=2}$, we obtain the first desired equality. 

Since $\beta_n =  \alpha_{n+1} - x \alpha_n$, we have
$$
\beta'_n =  \alpha'_{n+1} - x \alpha'_n - \alpha_n.
$$
Hence 
\begin{eqnarray*}
\beta'_n |_{x=2} &=&  \frac{1}{9} \left( (4^{n+1}-1) (n+ \frac{1}{3})-n-1 \right) -  \frac{2}{9}  \left( (4^n-1) (n- \frac{2}{3})-n \right) - \frac{4^n-1}{3} \\
&=& \frac{1}{27}((4^n-1) (6n - 1)+ 12).
\end{eqnarray*}
By $\sum_{q=0}^{n}  q \, 2^{q} \binom{2n-q}{q} = 2 \beta'_n |_{x=2}$, we also obtain the second desired equality. 
\end{proof}

\section{Total genus}

In this section, we compute the total genus of $2$-bridge knots with a given crossing number, which is denoted by $TG(c)$ for a given crossing number $c$. By definition, we have
\[
TG(c) = \sum_g g \cdot \sharp \{K \mbox{: $2$-bridge knot} \, | \, c(K) = c, g(K) = g \},
\] 
where $c(K)$ and $g(K)$ are the crossing number and the genus of a knot $K$ respectively.

\begin{proposition}\label{totalgenus}
For $c \geq 3$, we have 
\[
TG(c) =
\left\{
\begin{array}{ll}
\displaystyle{\frac{1}{36} \left( (3 c + 1) \cdot 2^{c-2} - 16 \right)} & c \equiv 0 \pmod{2} 
\smallskip\\
\displaystyle{\frac{1}{36} \left( (3 c + 1) \cdot 2^{c-2} + (3 c + 5)\cdot 2^{(c-1)/2} + 8 \right)} & c \equiv 1 \pmod{4} 
\smallskip\\
\displaystyle{\frac{1}{36} \left( (3 c + 1) \cdot 2^{c-2} + (3 c + 5)\cdot 2^{(c-1)/2} + 24 \right)} & c \equiv 3 \pmod{4} 
\end{array}
\right. .
\]
\end{proposition}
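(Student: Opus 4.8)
The plan is to organize $TG(c)$ by genus: for each $m$ let $N(c,m)$ be the number of $2$-bridge knots with crossing number $c$ and genus $m$, so that $TG(c)=\sum_{m} m\,N(c,m)$. By the discussion preceding the proposition, a genus-$m$ knot is recorded by a sequence $(a_1,\dots,a_{2m})$ with $a_i=\pm b_i$, $b_i\ge 1$, subject to $\sum_{i=1}^{2m}2b_i-\ell=c$ where $\ell$ counts the sign changes, and two such sequences give the same knot exactly when they are related by the involution $\sigma\colon(a_1,\dots,a_{2m})\mapsto(-a_{2m},\dots,-a_1)$. Hence by Burnside's lemma $N(c,m)=\tfrac12\bigl(P(c,m)+F(c,m)\bigr)$, where $P(c,m)$ is the number of admissible sequences and $F(c,m)$ the number fixed by $\sigma$.

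First I would compute $P(c,m)$. Fixing $\ell$, the magnitudes $b_i\ge 1$ with $\sum b_i=(c+\ell)/2$ form a composition counted by $\binom{(c+\ell)/2-1}{2m-1}$, while the sign patterns with exactly $\ell$ sign changes number $2\binom{2m-1}{\ell}$; thus $P(c,m)=2\sum_{\ell}\binom{2m-1}{\ell}\binom{(c+\ell)/2-1}{2m-1}$, the sum running over $\ell\equiv c\pmod 2$ with $0\le\ell\le 2m-1$. For the fixed points, a $\sigma$-fixed sequence is determined freely by its first half $(a_1,\dots,a_m)$, with palindromic magnitudes and anti-palindromic signs; a short check shows $\ell$ is then necessarily odd (so $F(c,m)=0$ for even $c$) and that $F(c,m)=2\sum_{k}\binom{m-1}{k}\binom{(c+2k+1)/4-1}{m-1}$. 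The integrality condition $c+2k+1\equiv 0\pmod 4$ forces $k$ odd when $c\equiv 1$ and $k$ even when $c\equiv 3\pmod 4$, and this is precisely the origin of the two odd cases in the statement.

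The computational heart is to evaluate $\tfrac12\sum_m m\,P(c,m)$ and $\tfrac12\sum_m m\,F(c,m)$. For the main term I would apply the identity $\binom{2m-1}{\ell}\binom{(c+\ell)/2-1}{2m-1}=\binom{(c+\ell)/2-1}{\ell}\binom{(c-\ell)/2-1}{2m-1-\ell}$, sum over $m$ first using the parity-restricted identities $\sum_{j}\binom{N}{j}=2^{N-1}$ and $\sum_{j}j\binom{N}{j}=N2^{N-2}$ to absorb the weight $m$, and then reindex the surviving sum over $\ell$ as a diagonal sum $\sum_q x^q\binom{2n-q}{q}$ (and its $q$-weighted analogue) evaluated at $x=2$ through Lemmas \ref{lem22} and \ref{lem23}; the leading contribution $(3c+1)2^{c-2}/36$ comes from the $q$-weighted identity of Lemma \ref{lem23}, consistent with the asymptote $\overline{g}_c\sim\tfrac14 c$. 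The correction $\tfrac12\sum_m m\,F(c,m)$ is treated identically and produces the $2^{(c-1)/2}$ contributions in the two odd cases.

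I expect the main obstacle to be the bookkeeping at the extreme values of $\ell$, where the auxiliary index $N=(c-\ell)/2-1$ degenerates to $0$ and the clean formula $2^{N-1}$ fails: these boundary discrepancies are exactly what correct the naive diagonal-sum evaluation and fix the additive constants $-16$, $+8$, and $+24$. Keeping the parity constraints, the admissible ranges of summation, and these boundary corrections mutually consistent across the three residue classes of $c$ is the delicate part; once that is done, collecting terms yields the three stated formulas.
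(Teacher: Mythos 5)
Your proposal is correct and follows essentially the same route as the paper: the same double sum over genus and sign-change count, the same composition/sign-pattern enumeration, the same product identity for binomial coefficients, and the same reduction (after summing over $m$ first) to the diagonal sums of Lemmas \ref{lem22} and \ref{lem23} at $x=2$, with the same boundary corrections at the extreme values of $\ell$ producing the additive constants. Your only packaging difference is invoking Burnside's lemma for the involution $(a_1,\dots,a_{2m})\mapsto(-a_{2m},\dots,-a_1)$, which compactly reproduces the paper's case analysis of symmetric versus non-symmetric $\mathbf{b}$ and correctly identifies that fixed sequences force $\ell$ odd, hence contribute only for odd $c$.
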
 

Combining Theorem \ref{thmernstsumners} and Proposition \ref{totalgenus}, we obtain Theorem \ref{mainthm}. 
We will divide the proof of Proposition \ref{totalgenus} into the cases of even crossing number and odd crossing number. 

\subsection{Even crossing number}

We first consider the case $c=2k$, $k \in \BZ$. 
Since $c$ and $\ell$ have the same parity, we may write $\ell = 2l$, where $l \in \BZ$ and $0 \le l \le k-1$. 
For every $0 \le l \le k-1$, the equation 
$$\sum_{i=1}^{2m} b_i = k+l,$$
where $b_i \in \BZ_{\ge 1}$, has exactly $\binom{k+l-1}{2m-1}$ solutions $(b_1, b_2, \dots, b_{2m})$. 
The number of ways of choosing $a_i = \pm b_i$ such that the number of sign changes among $(a_1, a_2, \dots, a_{2m})$ is exactly $\ell=2l$ 
is equal to twice the number of solutions $(d_1, d_2, \dots, d_{\ell+1}) \in (\BZ_{\ge 1})^{\ell+1}$ of the equation
$$\sum_{i=1}^{\ell+1} d_i = 2m,$$
and so it is equal to $2\binom{2m-1}{\ell} = 2\binom{2m-1}{2l}$. 

Suppose that $\mathbf{b}=(b_1, b_2, \dots, b_{2m})$ is not symmetric, namely $(b_1, b_2, \dots, b_{2m}) \neq (b_{2m}, \dots, b_2, b_1)$. 
If $K([2a_1, 2a_2, \dots, 2a_{2m}])$ belongs to $S_{\mathbf{b},\ell}$, 
then 
$K([2a_{2m}, 2a_{2m-1}, \dots, 2a_{1}])$ does not belong to $S_{\mathbf{b},\ell}$. 
Since the two knots $K([2a_1, 2a_2, \dots, 2a_{2m}])$ and $K([-2a_1, -2a_2, \dots, -2a_{2m}])$ in $S_{\mathbf{b},\ell}$ are distinct knots. 
Hence $S_{\mathbf{b},\ell}$ has cardinality $2\binom{2m-1}{\ell} = 2\binom{2m-1}{2l}$.  
Remark that if $(b_{2m}, b_{2m-1}, \dots, b_{1}) = (b'_1, b'_2, \dots, b'_{2m})$, then $S_{(b_1, b_2, \dots, b_{2m}),\ell} = S_{(b'_1, b'_2, \dots, b'_{2m}),\ell}$.


Suppose that $\mathbf{b}=(b_1, b_2, \dots, b_{2m})$ is symmetric 
(this only occurs when $k+l$ is even and the number of symmetric cases is equal to $\binom{(k+l)/2-1}{m-1}$). 
If $K([2a_1, 2a_2, \dots, 2a_{2m}])$ belongs to $S_{\mathbf{b},\ell}$, 
then both $K([2a_{2m}, 2a_{2m-1}, \dots, 2a_{1}])$ and $K([-2a_{2m}, -2a_{2m-1}, \dots, -2a_{1}])$ also belong to $S_{\mathbf{b},\ell}$. 
We need to consider the two cases where $(2a_1, 2a_2, \dots, 2a_{2m})$ is symmetric and not.  
If $(2a_1, 2a_2, \dots, 2a_{2m})$ is not symmetric, 
then $K([2a_1, 2a_2, \dots, 2a_{2m}]) = K([-2a_{2m}, -2a_{2m-1}, \dots, -2a_{1}])$ and $K([-2a_1, -2a_2, \dots, -2a_{2m}])=K([2a_{2m}, 2a_{2m-1}, \dots, 2a_{1}])$ are distinct knots. 
However, 
if $(2a_1, 2a_2, \dots, 2a_{2m})$ is symmetric, namely  $(2a_1, 2a_2, \dots, 2a_{2m})=(2a_{2m}, 2a_{2m-1}, \dots, 2a_1)$ 
(the number of symmetric cases is equal to $2\binom{m-1}{l}$), 
then $K([2a_1, 2a_2, \dots, 2a_{2m}]) = K([-2a_{2m}, -2a_{2m-1}, \dots, -2a_{1}])$ and $K([-2a_1, -2a_2, \dots, -2a_{2m}])=K([2a_{2m}, 2a_{2m-1}, \dots, 2a_{1}])$ are the same knot. 
Note that $(2a_1, 2a_2, \dots, 2a_{2m}) \not= (-2a_{2m}, -2a_{2m-1}, \dots, -2a_1)$. 
(Otherwise, we have $(2a_1, 2a_2, \dots, 2a_{2m}) = (2a_1, 2a_2, \dots, 2a_{m}, -2a_m, \dots, -2a_2, -2a_1)$ which implies that the number of sign changes is an odd number.)
Therefore if $\mathbf{b}$ is symmetric, $S_{\mathbf{b},\ell}$ has cardinality $\binom{2m-1}{2l}$. 
Remark that in this case, $S_{\mathbf{b},\ell} = S_{\mathbf{b}',\ell}$ if and only if $\mathbf{b} = \mathbf{b}'$. 

If 
$k+l$ is odd, then $(b_1, b_2, \dots, b_{2m})$ is not symmetric.   
Therefore the number of $2$-bridge knots with crossing number $c=2k$ and  number of sign changes $\ell = 2l$ is equal to
$$A_l =  \sum_{m=l+1}^{(k+l)/2} \binom{k+l-1}{2m-1} \cdot 2 \binom{2m-1}{2l} \cdot \frac{1}{2} = \sum_{m=l+1}^{(k+l)/2} \binom{k+l-1}{2l} \binom{k-l-1}{2m-2l-1}$$
and the total genus, namely, the total number of the genera of these $2$-bridge knots is 
$$B_l 
= \sum_{m=l+1}^{(k+l)/2} m \binom{k+l-1}{2l} \binom{k-l-1}{2m-2l-1}.$$

If 
$k+l$ is even, then there are both cases where $(b_1, b_2, \dots, b_{2m})$ is symmetric and not symmetric. 
Therefore the number of $2$-bridge knots with crossing number $c=2k$ and number of sign changes $\ell = 2l$ is equal to
\begin{eqnarray*}
A_l &=&  \sum_{m=l+1}^{(k+l)/2}  \left[ \binom{k+l-1}{2m-1}  - \binom{(k+l)/2-1}{m-1} \right] \binom{2m-1}{2l}+ \binom{(k+l)/2-1}{m-1} \binom{2m-1}{2l} \\
&=& \sum_{m=l+1}^{(k+l)/2} \binom{k+l-1}{2m-1} \binom{2m-1}{2l} = \sum_{m=l+1}^{(k+l)/2} \binom{k+l-1}{2l} \binom{k-l-1}{2m-2l-1}
\end{eqnarray*}
and the total genus is 
$$B_l 
= \sum_{m=l+1}^{(k+l)/2} m \binom{k+l-1}{2l} \binom{k-l-1}{2m-2l-1}.$$
As a consequence, $A_l$ and $B_l$ are the same regardless of the parity of $k+l$.  

In order to simplify $B_l$, we compute  $2B_l-(2l+1)A_l$ and $A_l$ as follows: 
\begin{eqnarray*}
2B_l-(2l+1)A_l &=&  \sum_{m=l+1}^{(k+l)/2} (2m-2l-1) \binom{k+l-1}{2l} \binom{k-l-1}{2m-2l-1} \\
&=&   \binom{k+l-1}{2l} \sum_{m=l+1}^{(k+l)/2} (2m-2l-1)  \binom{k-l-1}{2m-2l-1} \\
&=&  \binom{k+l-1}{k-l-1} \sum_{m'=0}^{(k-l)/2-1} (2m'+1) \binom{k-l-1}{2m'+1}.
\end{eqnarray*}
If $0 \le l \le k-2$, then 
\begin{eqnarray*}
A_l &=&   \binom{k+l-1}{2l} \sum_{m=l+1}^{(k+l)/2} \binom{k-l-1}{2m-2l-1} \\
&=&  \binom{k+l-1}{k-l-1} \sum_{m'=0}^{(k-l)/2-1} \binom{k-l-1}{2m'+1} \\
&=&  \frac{1}{2} \binom{k+l-1}{k-l-1} \sum_{m''=0}^{k-l-1} \binom{k-l-1}{m''} \\
&=& 2^{k-l-2} \binom{k+l-1}{k-l-1}.
\end{eqnarray*}
If $l=k-1$, then $A_l=0= 2^{k-l-2} \binom{k+l-1}{k-l-1} 2^{k-l-2} - \frac{1}{2}$.
When we take the sum of $B_l$ with respect to $l$, 
$B_l$ depends on the three cases, $0 \le l \le k-3$, $l=k-2$, and $l=k-1$. 
If $0 \le l \le k-3$, then 
\begin{eqnarray*}
2B_l &=& 2B_l-(2l+1)A_l + (2l+1)A_l \\
&=& \frac{1}{2}  \binom{k+l-1}{k-l-1} \sum_{m''=0}^{k-l-1} m'' \binom{k-l-1}{m''} + (2l+1) 2^{k-l-2} \binom{k+l-1}{k-l-1} \\
&=& \frac{k-l-1}{2} 2^{k-l-2}  \binom{k+l-1}{k-l-1} + (2l+1) 2^{k-l-2} \binom{k+l-1}{k-l-1}  \\
&=& (k+3l+1)2^{k-l-3} \binom{k+l-1}{k-l-1} .
\end{eqnarray*}
If $l=k-2$, then
\begin{eqnarray*}
2B_l &=& 2B_l-(2l+1)A_l + (2l+1)A_l \\
&=& \binom{k+l-1}{k-l-1} + (2l+1) 2^{k-l-2} \binom{k+l-1}{k-l-1} \\
&=& (k-l-1)2^{k-l-3} \binom{k+l-1}{k-l-1} +  \frac{2k-3}{2} + (2l+1) 2^{k-l-2} \binom{k+l-1}{k-l-1} \\
&=& (k+3l+1)2^{k-l-3} \binom{k+l-1}{k-l-1} +  \frac{2k-3}{2}.
\end{eqnarray*}
If $l=k-1$, then 
\begin{eqnarray*}
2B_l &=& 2B_l-(2l+1)A_l + (2l+1)A_l \\
&=& 0 + (2l+1) \left[ 2^{k-l-2} \binom{k+l-1}{k-l-1}  - \frac{1}{2} \right] \\
&=& (k-l-1)2^{k-l-3} \binom{k+l-1}{k-l-1}+ (2l+1) 2^{k-l-2} \binom{k+l-1}{k-l-1} - \frac{2k-1}{2} \\
&=& (k+3l+1)2^{k-l-3} \binom{k+l-1}{k-l-1} -  \frac{2k-1}{2}.
\end{eqnarray*}

By Lemma \ref{lem22}, Lemma \ref{lem23} and $c=2k$, we conclude 
\begin{eqnarray*}
TG(c) &=&  \sum_{l=0}^{k-1} B_l \\
&=& \frac{1}{2} \sum_{l=0}^{k-1}  (k+3l+1) 2^{k-l-3} \binom{k+l-1}{k-l-1} + \frac{1}{2} \left( \frac{2k-3}{2} - \frac{2k-1}{2} \right) \\
&=&  \frac{1}{8} \sum_{l=0}^{k-1}  (k+3l+1) 2^{k-l-1} \binom{k+l-1}{k-l-1} - \frac{1}{2} \\
 &=&  \frac{2k-1}{4} \sum_{l=0}^{k-1}  2^{k-l-1} \binom{k+l-1}{k-l-1} - \frac{3}{8}\sum_{l=0}^{k-1}  (k-l-1) 2^{k-l-1} \binom{k+l-1}{k-l-1} - \frac{1}{2} \\
 &=& \frac{2k-1}{4} \sum_{q=0}^{k-1}  2^{q} \binom{2k-2-q}{q} - \frac{3}{8} \sum_{q=0}^{k-1}  q \, 2^{q} \binom{2k-2-q}{q} - \frac{1}{2} \\
 &=& \frac{2k-1}{4} \, \frac{2 (4^{k-1}-1)+1}{3} - \frac{3}{8} \, \frac{2}{27} \left( (4^{k-1}-1)(6k-7) + 12k-12 \right)- \frac{1}{2} \\
&=& \frac{c-1}{4} \, \frac{2 (2^{c-2}-1)+1}{3} - \frac{1}{36} \left( (2^{c-2}-1)(3c-7) + 6c-12 \right)- \frac{1}{2} \\
&=& \frac{1}{36} \left( (3c+1) \cdot 2^{c-2} - 16 \right) .
\end{eqnarray*}

Therefore Proposition \ref{totalgenus} for the case of even crossing number is proved. 

\subsection{Odd crossing number} 
We now consider the case $c=2k+1$, $k \in \BZ$. 
Similarly, we may write $\ell = 2l+1$, where $l \in \BZ$ and $0 \le l \le k-1$. For every $0 \le l \le k-1$, the equation 
$$\sum_{i=1}^{2m} b_i = k+l+1,$$
where $b_i \in \BZ_{\ge 1}$, has exactly $\binom{k+l}{2m-1}$ solutions $(b_1, b_2, \dots, b_{2m})$. 
The number of ways of choosing $a_i = \pm b_i$ such that the number of sign changes among $a_i$'s is exactly $\ell=2l+1$ is equal to twice the number of solutions $(d_1, d_2, \dots, d_{\ell+1}) \in (\BZ_{\ge 1})^{\ell+1}$ of the equation
$$\sum_{i=1}^{\ell+1} d_i = 2m,$$
and so it is equal to $2\binom{2m-1}{\ell} = 2\binom{2m-1}{2l+1}$. 

Since the number of sign changes is odd,  $(2a_1, 2a_2, \dots, 2a_{2m})$ is not symmetric. 
Then $K([2a_1, 2a_2, \dots, 2a_{2m}]) = K([-2a_{2m}, -2a_{2m-1}, \dots, -2a_{1}])$ 
and $K([2a_{2m}, 2a_{2m-1}, \dots, 2a_{1}])=K([-2a_1, -2a_2, \dots, -2a_{2m}])$  are always distinct knots. 


Suppose that $(b_1, b_2, \dots, b_{2m})$ is not symmetric, then
$S_{(b_1, b_2, \dots, b_{2m}),\ell}$ has cardinality $2\binom{2m-1}{\ell} = 2\binom{2m-1}{2l+1}$.
Remark that if $(b_{2m}, b_{2m-1}, \dots, b_{1}) = (b'_1, b'_2, \dots, b'_{2m})$, then $S_{(b_1, b_2, \dots, b_{2m}),\ell} = S_{(b'_1, b'_2, \dots, b'_{2m}),\ell}$.

Suppose that $(b_1, b_2, \dots, b_{2m})$ is symmetric (this only occurs when $k+l+1$ is even and the number of symmetric cases is equal to $\binom{(k+l-1)/2}{m-1}$).  
Note that $(-2a_{2m}, -2a_{2m-1},\dots, -2a_1) = (2a_1, 2a_2, \dots, 2a_{2m})$ if and only if $(2a_1, 2a_2, \dots, 2a_{2m}) = (2a_1, 2a_2, \dots, 2a_{m}, -2a_m, \dots, -2a_2, -2a_1)$.  
Therefore $S_{(b_1, b_2, \dots, b_{2m}),\ell}$ has cardinality 
$\frac{1}{2} (2 \binom{2m-1}{\ell} - 2 \binom{m-1}{l}) + 2 \binom{m-1}{l} = \binom{2m-1}{\ell} + \binom{m-1}{l} = \binom{2m-1}{2l+1} + \binom{m-1}{l}$. 

If 
$k+l$ is even, then $(b_1, b_2, \dots, b_{2m})$ is not symmetric.   
Therefore the number of $2$-bridge knots with crossing number $c=2k+1$ and number of sign changes $\ell = 2l+1$ is equal to
$$A_l =  \sum_{m=l+1}^{(k+l)/2} \binom{k+l}{2m-1}\cdot 2  \binom{2m-1}{2l+1} \cdot \frac{1}{2} = \sum_{m=l+1}^{(k+l)/2} \binom{k+l}{2l+1} \binom{k-l-1}{2m-2l-2}$$
and the total genus is 
$$B_l =  \sum_{m=l+1}^{(k+l)/2} m \binom{k+l}{2l+1} \binom{k-l-1}{2m-2l-2}.$$

If 
$k+l$ is odd, then there are both cases where $(b_1, b_2, \dots, b_{2m})$ is symmetric and not symmetric. 
Therefore the number of $2$-bridge knots with crossing number $c=2k+1$ and the number of sign changes $\ell = 2l+1$ is equal to
\begin{eqnarray*}
A_l &=&  \sum_{m=l+1}^{(k+l+1)/2}  \left[ \left[ \binom{k+l}{2m-1} - \binom{(k+l-1)/2}{m-1}\right] \binom{2m-1}{2l+1} \right. \\
&& \qquad \qquad + \left. \binom{(k+l-1)/2}{m-1} \left[ \binom{2m-1}{2l+1}  + \binom{m-1}{l} \right] \right] \\
&=&  \sum_{m=l+1}^{(k+l+1)/2}  \left[ \binom{k+l}{2m-1}  \binom{2m-1}{2l+1} + \binom{(k+l-1)/2}{m-1} \binom{m-1}{l} \right] \\
&=&  \sum_{m=l+1}^{(k+l+1)/2}  \left[ \binom{k+l}{2l+1} \binom{k-l-1}{2m-2l-2} + \binom{(k+l-1)/2}{l} \binom{(k-l-1)/2}{m-l-1} \right]
\end{eqnarray*}
and the total genus is 
$$B_l =  \sum_{m=l+1}^{(k+l+1)/2}  m \left[ \binom{k+l}{2l+1} \binom{k-l-1}{2m-2l-2} + \binom{(k+l-1)/2}{l} \binom{(k-l-1)/2}{m-l-1} \right].$$

Hence we obtain the total genus  
\begin{eqnarray*}
TG(2k+1) = \sum_{l=0}^{k-1} B_l &=& \sum_{l=0}^{k-1} \sum_{m=l+1}^{(k+l+1)/2}   m \binom{k+l}{2l+1} \binom{k-l-1}{2m-2l-2} \\
&& + \,  \sum_{\substack{l=0 \\ l + k +1 \equiv 0 \pmod{2}}}^{k-1} \sum_{m=l+1}^{(k+l+1)/2}   m \binom{(k+l-1)/2}{l}  \binom{(k-l-1)/2}{m-l-1}.
\end{eqnarray*}

Moreover, we simplify this expression by using binomial coefficient formulas. 
When we take the sum of $B_l$ with respect to $l$ of the first term,
$B_l$ depends on the three cases, $0 \le l \le k-3$, $l=k-2$, and $l=k-1$. 
If $0 \le l \le k-3$, then 
\begin{eqnarray*}
&& \sum_{m=l+1}^{(k+l+1)/2}   m \binom{k+l}{2l+1} \binom{k-l-1}{2m-2l-2} \\ 
&=& \sum_{m=l+1}^{(k+l+1)/2}  (m-l-1) \binom{k+l}{2l+1} \binom{k-l-1}{2m-2l-2} + (l+1) \sum_{m=l+1}^{(k+l+1)/2} \binom{k+l}{2l+1} \binom{k-l-1}{2m-2l-2} \\
&=& \binom{k+l}{2l+1} \sum_{m=l+1}^{(k+l+1)/2} (m-l-1) \binom{k-l-1}{2m-2l-2} + (l+1)  \binom{k+l}{2l+1} \sum_{m=l+1}^{(k+l+1)/2} \binom{k-l-1}{2m-2l-2}\\
&=& \binom{k+l}{2l+1} \sum_{m'=0}^{(k-l-1)/2} m' \binom{k-l-1}{2m'} + (l+1)  \binom{k+l}{2l+1} \sum_{m'=0}^{(k-l-1)/2} \binom{k-l-1}{2m'}\\
&=& \frac{1}{4} \binom{k+l}{2l+1} \sum_{m''=0}^{k-l-1} m'' \binom{k-l-1}{m''} + \frac{l+1}{2} \binom{k+l}{2l+1} \sum_{m''=0}^{k-l-1} \binom{k-l-1}{m''} \\
&=& \frac{k-l-1}{4} 2^{k-l-2} \binom{k+l}{2l+1} + \frac{l+1}{2} 2^{k-l-1}  \binom{k+l}{2l+1} \\
&=& (k+3l+3) 2^{k-l-4} \binom{k+l}{2l+1}.
\end{eqnarray*}
If $l=k-2$, then 
\begin{eqnarray*}
&& \sum_{m=l+1}^{(k+l+1)/2}   m \binom{k+l}{2l+1} \binom{k-l-1}{2m-2l-2} \\
&=& \sum_{m=l+1}^{(k+l+1)/2}  (m-l-1) \binom{k+l}{2l+1} \binom{k-l-1}{2m-2l-2} + (l+1) \sum_{m=l+1}^{(k+l+1)/2} \binom{k+l}{2l+1} \binom{k-l-1}{2m-2l-2} \\
&=& \binom{k+l}{2l+1} \sum_{m=l+1}^{(k+l+1)/2} (m-l-1) \binom{k-l-1}{2m-2l-2} + (l+1) \binom{k+l}{2l+1} \sum_{m=l+1}^{(k+l+1)/2}  \binom{k-l-1}{2m-2l-2}\\
&=& 0 + \frac{l+1}{2}  2^{k-l-1}\binom{k+l}{2l+1} \\
&=& \frac{k-l-1}{4}  2^{k-l-2} \binom{k+l}{2l+1} -\frac{k-1}{2} + \frac{l+1}{2}  2^{k-l-1} \binom{k+l}{2l+1} \\
&=& (k+3l+3) 2^{k-l-4} \binom{k+l}{2l+1} -\frac{k-1}{2}.
\end{eqnarray*}
If $l=k-1$, then 
\begin{eqnarray*}
&& \sum_{m=l+1}^{(k+l+1)/2}   m \binom{k+l}{2l+1} \binom{k-l-1}{2m-2l-2} \\
&=& (m-l-1) \sum_{m=l+1}^{(k+l+1)/2}  \binom{k+l}{2l+1} \binom{k-l-1}{2m-2l-2} + (l+1) \sum_{m=l+1}^{(k+l+1)/2}  \binom{k+l}{2l+1} \binom{k-l-1}{2m-2l-2} \\
&=& \binom{k+l}{2l+1} \sum_{m=l+1}^{(k+l+1)/2} (m-l-1) \binom{k-l-1}{2m-2l-2} + (l+1) \binom{k+l}{2l+1} \sum_{m=l+1}^{(k+l+1)/2}  \binom{k-l-1}{2m-2l-2}\\
&=& 0 + (l+1) \\
&=& (k+3l+3) 2^{k-l-4} \binom{k+l}{2l+1} + \frac{k}{2}.
\end{eqnarray*}
Since $-\frac{k-1}{2} + \frac{k}{2} = \frac{1}{2}$, we obtain

\begin{eqnarray*}
\sum_{l=0}^{k-1} \sum_{m=l+1}^{(k+l+1)/2}   m \binom{k+l}{2l+1} \binom{k-l-1}{2m-2l-2} = \sum_{l=0}^{k-1} (k+3l+3) 2^{k-l-4} \binom{k+l}{2l+1} + \frac{1}{2}.
\end{eqnarray*}

Similarly, if $l + k+1 \equiv 0 \pmod{2}$, then
\begin{eqnarray*}
&& \sum_{m=l+1}^{(k+l+1)/2}   m \binom{(k+l-1)/2}{l} \binom{(k-l-1)/2}{m-l-1} \\
&=& \sum_{m=l+1}^{(k+l+1)/2} (m-l-1) \binom{(k+l-1)/2}{l} \binom{(k-l-1)/2}{m-l-1} \\
&& + (l+1) \sum_{m=l+1}^{(k+l+1)/2} \binom{(k+l-1)/2}{l} \binom{(k-l-1)/2}{m-l-1} \\
&=&  \binom{(k+l-1)/2}{l}  \sum_{m=l+1}^{(k+l+1)/2} (m-l-1) \binom{(k-l-1)/2}{m-l-1} \\
&& + (l+1) \binom{(k+l-1)/2}{l} \sum_{m=l+1}^{(k+l+1)/2} \binom{(k-l-1)/2}{m-l-1} \\
&=&  \binom{(k+l-1)/2}{l}  \sum_{m'=0}^{(k-l-1)/2} m' \binom{(k-l-1)/2}{m'} \\
&&+ (l+1) \binom{(k+l-1)/2}{l}  \sum_{m'=0}^{(k-l-1)/2} \binom{(k-l-1)/2}{m'}  \\
&=& \frac{k-l-1}{2} 2^{(k-l-3)/2} \binom{(k+l-1)/2}{l} + (l+1)  2^{(k-l-1)/2} \binom{(k+l-1)/2}{l} \\
&=& (k+3l+3) 2^{(k-l-5)/2} \binom{(k+l-1)/2}{l}.
\end{eqnarray*}

Then we obtain 
\begin{eqnarray*}
TG(2k+1) &=& \sum_{l=0}^{k-1} (k+3l+3) 2^{k-l-4} \binom{k+l}{2l+1} + \frac{1}{2}  \\
&&+ \sum_{\substack{l=0 \\ l + k +1 \equiv 0 \pmod{2}}}^{k-1} (k+3l+3) 2^{(k-l-5)/2} \binom{(k+l-1)/2}{l} \\
&=& \frac{1}{8} \sum_{l=0}^{k-1} (k+3l+3) 2^{k-l-1} \binom{k+l}{2l+1} + \frac{1}{2}  \\
&&+ \sum_{\substack{l=0 \\ l + k +1 \equiv 0 \pmod{2}}}^{k-1} \frac{k+3l+3}{4} 2^{(k-l-1)/2} \binom{(k+l-1)/2}{l}.
\end{eqnarray*}

Furthermore, the first term of the above expression can be simplified by using the following:
\begin{eqnarray*}
&&  \sum_{l=0}^{k-1}  (k+3l+3) 2^{k-l-1} \binom{k+l}{2l+1} \\
 &=&  \sum_{l=0}^{k-1}  (k+3l+3) 2^{k-l-1} \binom{k+l}{k-l-1} \\
 &=&  4k \sum_{l=0}^{k-1}  2^{k-l-1} \binom{k+l}{k-l-1} - 3\sum_{l=0}^{k-1}  (k-l-1) 2^{k-l-1} \binom{k+l}{k-l-1} \\
 &=& 4k \sum_{q=0}^{k-1}  2^{q} \binom{2k-1-q}{q} - 3\sum_{q=0}^{k-1}  q \, 2^{q} \binom{2k-1-q}{q} \\
 &=& \frac{4k (4^k-1)}{3} - \frac{2}{3} \left((4^k-1) (k- \frac{2}{3})-k \right) \\
 &=& \frac{1}{9} \left( (3c+1) 2^{c-1} - 4 \right) , \qquad c= 2k+1.
\end{eqnarray*}

The third term depends on the parity of $k$. 
In both cases, Lemma \ref{lem22} and Lemma \ref{lem23} play an important role to simplify the expression. 
If $k=2n$, then 
\begin{eqnarray*}
&& \sum_{\substack{l=0 \\ l + k +1 \equiv 0 \pmod{2}}}^{k-1} \frac{k+3l+3}{4}\, 2^{(k-l-1)/2}  \binom{(k+l-1)/2}{l} \\
&=& \sum_{p=0}^{n-1} \frac{n+3p+3}{2}\, 2^{n-p-1}  \binom{n+p}{2p+1} \\
&=& 2n  \sum_{p=0}^{n-1}  2^{n-p-1} \binom{n+p}{n-p-1}  -  \frac{3}{2} \sum_{p=0}^{n-1} (n-p-1)\, 2^{n-p-1} \binom{n+p}{n-p-1}  \\
&=& 2n   \sum_{q=0}^{n-1} 2^q \binom{2n-1-q}{q} -  \frac{3}{2} \sum_{q=0}^{n-1} q \, 2^{q} \binom{2n-1-q}{q} \\
&=& \frac{2n (4^n-1)}{3} -  3 \cdot \frac{n(4^n-2)- (4^n - 1) \frac{2}{3}}{9} \\
&=& \frac{2n(4^n-1)}{3} -  \frac{n(4^n-2)- (4^n - 1) \frac{2}{3}}{3} \\
&=& \frac{4^n-1}{3}  \left( n + \frac{2}{3} \right) + \frac{n}{3} \\
&=& \frac{1}{36} \left( (3c+5) \cdot 2^{(c-1)/2} - 8 \right), \qquad c = 4n+1 . 
\end{eqnarray*} 
Therefore if $c \equiv 1 \pmod{4}$, we conclude 
\begin{eqnarray*}
TG(c) 
&=& \frac{1}{8} \cdot \frac{1}{9} \left( (3c+1) 2^{c-1} - 4 \right) + \frac{1}{2} + \frac{1}{36} \left( (3c+5) \cdot 2^{(c-1)/2} - 8 \right) \\
&=& \frac{1}{36} \left( (3 c + 1) \cdot 2^{c-2} + (3 c + 5)\cdot 2^{(c-1)/2} + 8 \right) .
\end{eqnarray*}

If $k=2n+1$, then 
\begin{eqnarray*}
&& \sum_{\substack{l=0 \\ l + k +1 \equiv 0 \pmod{2}}}^{k-1} \frac{k+3l+3}{4}\, 2^{(k-l-1)/2}  \binom{(k+l-1)/2}{l}\\
&=& \sum_{p=0}^{n} \frac{n+3p+2}{2}\, 2^{n-p} \binom{n+p}{2p} \\
&=& (2n+1)  \sum_{p=0}^{n}  2^{n-p} \binom{n+p}{n-p}   -  \frac{3}{2} \sum_{p=0}^{n}  (n-p)\, 2^{n-p} \binom{n+p}{n-p} \\
&=& (2n+1)   \sum_{q=0}^{n}  2^{q} \binom{2n-q}{q}  -  \frac{3}{2} \sum_{q=0}^{n}   q \, 2^{q} \binom{2n-q}{q}\\
&=& (2n+1) \, \frac{2 \times 4^n+1}{3} -  \frac{3}{2} \, 2 \, \left[ \frac{4^n-1}{3}  (\frac{2n}{3} -\frac{1}{9})+ \frac{4n}{9} \right]\\
&=& \frac{4^n-1}{3}  \left( 2n + \frac{7}{3} \right) + \frac{2n}{3} +1 \\
&=& \frac{1}{36} \left( (3c+5) \cdot 2^{(c-1)/2} + 8 \right), \qquad c = 4n+3 . 
\end{eqnarray*} 
Therefore if $c \equiv 3 \pmod{4}$, we conclude 
\begin{eqnarray*}
TG(c) 
&=& \frac{1}{8} \cdot \frac{1}{9} \left( (3c+1) 2^{c-1} - 4 \right) + \frac{1}{2} + \frac{1}{36} \left( (3c+5) \cdot 2^{(c-1)/2} + 8 \right) \\
&=& \frac{1}{36} \left( (3 c + 1) \cdot 2^{c-2} + (3 c + 5)\cdot 2^{(c-1)/2} + 24 \right) .
\end{eqnarray*}

Hence we obtain Proposition \ref{totalgenus} for odd crossing number and 
this completes the proof of Proposition \ref{totalgenus}. 

\section{Average genus}

In this section, we show Theorem \ref{mainthm} by using Theorem \ref{thmernstsumners} and Proposition \ref{totalgenus}. 

\begin{proof}[Proof for Theorem \ref{mainthm}]
The average genus $\overline{g}_c$ is given by $TG(c) / TK(c)$. 
Both $TG(c)$ and $TK(c)$ are obtained by Theorem \ref{thmernstsumners} and Proposition \ref{totalgenus}. 

Case 1: for $c \equiv 0 \pmod{2}$,  
\begin{eqnarray*}
\overline{g}_c 
&=& \frac{\frac{1}{36} \left( (3 c + 1) 2^{c-2} - 16 \right)}{\frac{1}{3} \left( 2^{c-2} - 1 \right)} \\
&=& \frac{\left( (3 c + 1) 2^{c-2} - (3c+1) - 16 + (3c+1)  \right)}{12 \left( 2^{c-2} - 1 \right)} \\
&=& \frac{3c+1}{12} + \frac{3c - 15}{12 \left( 2^{c-2} - 1 \right)} \\
&=& \frac{c}{4} + \frac{1}{12} +\frac{c - 5}{2^c - 4} .
\end{eqnarray*}

Case 2: for $c \equiv 1 \pmod{4}$,  
\begin{eqnarray*}
\overline{g}_c 
&=& \frac{\frac{1}{36} \left( (3 c + 1) \cdot 2^{c-2} + (3 c + 5)\cdot 2^{(c-1)/2} + 8 \right)}{\frac{1}{3} \left(2^{c-2} + 2^{(c-1)/2} \right)} \\
&=& \frac{(3 c + 1) \cdot 2^{c-2} + (3 c + 1)\cdot 2^{(c-1)/2} + 4 \cdot 2^{(c-1)/2} + 8}{12 \left(2^{c-2} + 2^{(c-1)/2} \right)} \\
&=& \frac{3c+1}{12} + \frac{2^{(c-1)/2} + 2 }{3 \left(2^{c-2} + 2^{(c-1)/2} \right)} \\
&=& \frac{c}{4} + \frac{1}{12} + \frac{2^{(c-1)/2} + 2 }{3 \cdot 2^{(c-3)/2} \left(2^{(c-1)/2} + 2 \right)} \\
&=& \frac{c}{4} + \frac{1}{12} + \frac{1}{3 \cdot 2^{(c-3)/2} } .
\end{eqnarray*}

Case 3: for $c \equiv 3 \pmod{4}$,  
\begin{eqnarray*}
\overline{g}_c 
&=& \frac{\frac{1}{36} \left( (3 c + 1) \cdot 2^{c-2} + (3 c + 5)\cdot 2^{(c-1)/2} + 24 \right)}{\frac{1}{3} \left(2^{c-2} + 2^{(c-1)/2 + 2} \right)} \\
&=& \frac{\left( (3 c + 1) \cdot 2^{c-2} + (3 c + 1)\cdot 2^{(c-1)/2} + (3 c + 1)\cdot 2 + 4 \cdot 2^{(c-1)/2} - 6 c + 22 \right)}{12 \left(2^{c-2} + 2^{(c-1)/2} + 2 \right)} \\
&=& \frac{3c+1}{12} + \frac{2 \cdot 2^{(c-1)/2} - 3 c + 11}{6 \left(2^{c-2} + 2^{(c-1)/2} + 2 \right)} \\
&=& \frac{c}{4} + \frac{1}{12} + \frac{2^{(c+1)/2} - 3 c + 11}{12 \left(2^{c-3} + 2^{(c-3)/2} + 1 \right)} .
\end{eqnarray*}

\end{proof}

\section{Appendix}

In this section, a knot is regarded as the same as its mirror image. 
This implies that the following four $2$-bridge knots are considered as the same knot: 
\begin{align*}
&K([2a_1,2a_2, \ldots, 2a_{2m}]), K([-2a_1,-2a_2, \ldots, -2a_{2m}]), \\
&K([2a_{2m},\ldots,2 a_2, 2a_1]), K([-2a_{2m}, \ldots,- a_2, -2a_1]).
\end{align*}
If we do not distinguish a knot with its mirror image, 
then the number of $2$-bridge knots and the total genus, and the average genus are different from the above results. 
We determine the average genus of $2$-bridge knots with respect to crossing number, up to mirror image. 

Ernst and Sumners in \cite{ernstsumners} also gave the number of $2$-bridge knots with respect to crossing number, up to mirror image. 

\begin{theorem}[Ernst-Sumners \cite{ernstsumners}]\label{thmernstsumnersuptomirror}
For $c \geq 3$, the number of $2$-bridge knots with $c$ crossings up to mirror image is given by 
\[
TK^*(c) = 
\left\{
\begin{array}{ll}
\displaystyle{\frac{1}{3} (2^{c-3} +  2^{(c-4)/2})} & c \equiv 0 \pmod{4} 
\smallskip\\
\displaystyle{\frac{1}{3} (2^{c-3} +  2^{(c-3)/2})} & c \equiv 1 \pmod{4} 
\smallskip\\
\displaystyle{\frac{1}{3} (2^{c-3} + 2^{(c-4)/2} - 1)} & c \equiv 2 \pmod{4} 
\smallskip\\ 
\displaystyle{\frac{1}{3} (2^{c-3} + 2^{(c-3)/2} + 1)} & c \equiv 3 \pmod{4} 
\end{array}
\right. .
\]
\end{theorem}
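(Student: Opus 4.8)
The plan is to reduce the count up to mirror image to the already-known count $TK(c)$ of Theorem \ref{thmernstsumners} together with a count of the amphichiral $2$-bridge knots. Write $A(c)$ for the number of $2$-bridge knots with $c$ crossings that are amphichiral, i.e.\ equal to their own mirror image. The mirror involution acts on the set of $TK(c)$ knots (where a knot and its mirror are distinguished); its fixed points are exactly the $A(c)$ amphichiral knots, while every other knot is paired with its distinct mirror image. Counting orbits of a $\BZ/2$-action therefore gives
\[
TK^*(c) = \frac{1}{2}\bigl(TK(c) + A(c)\bigr),
\]
so the whole problem reduces to determining $A(c)$.

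First I would characterize the amphichiral knots in the continued-fraction language of Section~2. The mirror image of $K([2a_1,\dots,2a_{2m}])$ is $K([-2a_1,\dots,-2a_{2m}]) = K([2a_{2m},\dots,2a_1])$, and the only other even continued fraction representing the same knot is the reverse-negate $K([-2a_{2m},\dots,-2a_1])$ recalled in Section~2. Comparing the two unordered pairs of representations and using $a_i\neq 0$, one finds that $K$ is amphichiral precisely when the signed sequence $(a_1,\dots,a_{2m})$ is palindromic, i.e.\ $a_i = a_{2m+1-i}$. For such a sequence the reverse-negate representation is $(-a_1,\dots,-a_{2m})$, which is again palindromic and distinct from $(a_1,\dots,a_{2m})$; hence each amphichiral knot corresponds to exactly two palindromic signed sequences, and $A(c)$ equals half the number of palindromic signed sequences with crossing number $c$.

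Next I would count those palindromic sequences by the same compositions-and-sign-changes bookkeeping used for Proposition~\ref{totalgenus}. A palindromic sequence is determined by its first half $(a_1,\dots,a_m)$; writing $b_i=|a_i|$ and $S=\sum_{i=1}^m b_i$ gives $\sum_{i=1}^{2m}2|a_i| = 4S$, while the number of sign changes of the full palindrome is $\ell = 2t$, where $t$ is the number of sign changes of $(a_1,\dots,a_m)$. Thus $c = 4S - 2t$ is always even, which immediately yields $A(c)=0$ and $TK^*(c)=TK(c)/2$ for odd $c$, matching the two odd cases of the theorem. For $c=2k$, the number of half-sequences with prescribed $S$, $t$, and length $m$ is $\binom{S-1}{m-1}\cdot 2\binom{m-1}{t}$, and the crossing-number constraint reads $2S-t=k$. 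Applying the identity $\binom{a}{b}\binom{b}{c}=\binom{a}{c}\binom{a-c}{b-c}$ and summing the freed index over $m$ collapses the double sum to a single sum of the shape $\sum_q 2^{q}\binom{2N-q}{q}$ or $\sum_q 2^{q}\binom{2N-1-q}{q}$, which are evaluated by the two closed forms in Lemma~\ref{lem22}. Splitting according to the parity of $t$, equivalently of $k$, into $c\equiv 0$ and $c\equiv 2\pmod 4$ produces $A(c)=\tfrac13\bigl(2^{(c-2)/2}+1\bigr)$ and $A(c)=\tfrac13\bigl(2^{(c-2)/2}-1\bigr)$ respectively.

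Substituting these values of $A(c)$, together with $TK(c)$ from Theorem~\ref{thmernstsumners}, into $TK^*(c)=\tfrac12\bigl(TK(c)+A(c)\bigr)$ yields the four stated formulas after routine simplification. I expect the main obstacle to be the amphichiral characterization in the second step: one must pin down exactly which even continued fractions represent a given knot and its mirror, verify that palindromy is the precise condition, and confirm the two-to-one correspondence between palindromic sequences and amphichiral knots, in particular ruling out coincidences at the center of the palindrome and checking that the crossing-number formula agrees on both representations of a knot. Once that correspondence is secured, the remaining steps are the same composition-counting and binomial manipulations already carried out in Section~3.
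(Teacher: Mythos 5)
The paper does not prove this statement at all: it is quoted verbatim from Ernst--Sumners \cite{ernstsumners}, so there is no in-paper argument to compare against. Your proposal supplies a genuine, self-contained derivation, and I believe it is correct. The orbit count $TK^*(c)=\tfrac12\bigl(TK(c)+A(c)\bigr)$ is the right reduction; the characterization ``amphichiral iff the even continued fraction is palindromic'' is exactly the assertion the paper itself makes in Section~2 (``these two knots are distinct if and only if $(2a_1,\dots,2a_{2m})$ is not symmetric''), and your two-to-one correspondence is sound because the second representation of a palindrome $(a_1,\dots,a_{2m})$ is its negation, which is again a palindrome and is distinct since $a_i\neq 0$. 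I checked your claimed values of $A(c)$: for $c=2k$ the palindrome count is $\sum_{t\equiv k\,(2)}2^{(k-t)/2}\binom{(k+t)/2-1}{t}$, which via the substitutions $k=2n$, $t=2p$ (resp.\ $k=2n+1$, $t=2p+1$) collapses by Lemma~\ref{lem22} to $\tfrac{2}{3}(2\cdot 4^{n-1}+1)$ (resp.\ $\tfrac{2}{3}(4^n-1)$), giving $A(c)=\tfrac13\bigl(2^{(c-2)/2}\pm 1\bigr)$ as you state, and these reproduce all four cases of the theorem and the small values in Table~\ref{tablegc} (e.g.\ $A(4)=1$, $A(6)=1$, $A(8)=3$). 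The one point you should make fully explicit, as you yourself flag, is that a $2$-bridge knot has \emph{exactly} the two even-continued-fraction representations $(a_i)$ and $(-a_{2m+1-i})$ (the uniqueness of the even expansion together with $K(p,q)=K(p,q')$ iff $q'\equiv q^{\pm1}\pmod p$); this fact underlies all of the paper's Section~3 counting as well, so invoking it is consistent with the paper's framework. A minor edge case worth a sentence: the term $t=k$ contributes nothing since it would force $m>S$.
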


We get the total genus of $2$-bridge knots with respect to crossing number up to mirror image, which is denoted by $TG^*(c)$ 
in a similar way. 

\begin{proposition}\label{totalgenusuptomirror}
For $c \geq 3$, we have 
\[
TG^*(c) =
\left\{
\begin{array}{ll}
\displaystyle{\frac{1}{72} \left( (3 c + 1) \cdot 2^{c-2} + (3 c + 2)\cdot 2^{(c-2)/2} - 8 \right)} & c \equiv 0 \pmod{4} 
\smallskip\\
\displaystyle{\frac{1}{72} \left( (3 c + 1) \cdot 2^{c-2} + (3 c + 5)\cdot 2^{(c-1)/2} + 8 \right)} & c \equiv 1 \pmod{4}
\smallskip \\
\displaystyle{\frac{1}{72} \left( (3 c + 1) \cdot 2^{c-2} + (3 c + 2)\cdot 2^{(c-2)/2} - 24 \right)} & c \equiv 2 \pmod{4}
\smallskip \\ 
\displaystyle{\frac{1}{72} \left( (3 c + 1) \cdot 2^{c-2} + (3 c + 5)\cdot 2^{(c-1)/2} + 24 \right)} & c \equiv 3 \pmod{4} 
\end{array}
\right. .
\]
\end{proposition}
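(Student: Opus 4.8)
The plan is to express $TG^*(c)$ in terms of the already-computed $TG(c)$ plus a correction coming from the amphichiral knots, and to reduce that correction to the binomial identities of Section~2. Since the genus is a mirror invariant, every non-amphichiral $2$-bridge knot and its distinct mirror image carry the same genus and are merged into a single knot in the up-to-mirror count, whereas an amphichiral knot (one equal to its own mirror) is unaffected. Writing $A$ for the set of amphichiral $2$-bridge knots with $c$ crossings and $TG_A(c)=\sum_{K\in A}g(K)$, a count over mirror pairs gives
\[
TG^*(c)=\tfrac12\bigl(TG(c)+TG_A(c)\bigr),
\]
together with the parallel identity $TK^*(c)=\tfrac12\bigl(TK(c)+|A|\bigr)$, which I would use as a consistency check against Theorems~\ref{thmernstsumners} and \ref{thmernstsumnersuptomirror}. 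Thus the whole problem reduces to computing $TG_A(c)$.

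Next I would characterize the amphichiral knots combinatorially. By the facts recalled in Section~2, $K([2a_1,\dots,2a_{2m}])$ equals its mirror image exactly when $(2a_1,\dots,2a_{2m})$ is symmetric, i.e.\ palindromic with $a_i=a_{2m+1-i}$. Such a palindrome is determined by its first half $(a_1,\dots,a_m)$, and if $t$ is the number of sign changes among $a_1,\dots,a_m$, then the total number of sign changes is $\ell=2t$, which is even. Hence amphichiral $2$-bridge knots occur only for even $c$, so for odd $c$ we get $TG_A(c)=0$ and the proposition follows immediately from $TG^*(c)=\tfrac12 TG(c)$ via Proposition~\ref{totalgenus}.

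For even $c=2k$ the work is to enumerate amphichiral knots by genus. Here $c=\sum 2|a_i|-\ell=4\sum_{i=1}^m|a_i|-2t$, so $\sum_{i=1}^m|a_i|=(k+t)/2$, forcing $t\equiv k\pmod 2$. Counting the $\binom{(k+t)/2-1}{m-1}$ compositions of the absolute values and the $2\binom{m-1}{t}$ sign patterns with $t$ sign changes, then dividing by $2$ to pass from palindromic sequences to knots (a palindrome and its negation represent the same knot under the reverse–negate identification, and no even-$\ell$ palindrome is fixed by this involution, since its fixed sequences are anti-palindromic and carry odd $\ell$), the number of amphichiral knots of genus $m$ is $\binom{m-1}{t}\binom{(k+t)/2-1}{m-1}$. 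Therefore
\[
TG_A(2k)=\sum_{t\,\equiv\,k\,(2)}\ \sum_{m}\,m\binom{m-1}{t}\binom{(k+t)/2-1}{m-1}.
\]
The inner sum over $m$ collapses using $\binom{m-1}{t}\binom{N-1}{m-1}=\binom{N-1}{t}\binom{N-1-t}{m-1-t}$ together with $\sum_s\binom{n}{s}=2^n$ and $\sum_s s\binom{n}{s}=n2^{n-1}$, leaving a single sum in $t$ of the shape $\sum(n+3t+\mathrm{const})\,2^{(k-t)/2-3}\binom{(k+t-2)/2}{t}$.

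Finally I would substitute $k=2n$ (for $c\equiv0\pmod4$) and $k=2n+1$ (for $c\equiv2\pmod4$), reindex by $q=n-1-p$ to bring each sum into the forms $\sum 2^q\binom{2\nu-q}{q}$ and $\sum q\,2^q\binom{2\nu-q}{q}$, and evaluate them by Lemmas~\ref{lem22} and \ref{lem23}. This yields $TG_A(c)=\tfrac{1}{36}\bigl((3c+2)2^{(c-2)/2}+8\bigr)$ for $c\equiv0\pmod4$ and $TG_A(c)=\tfrac{1}{36}\bigl((3c+2)2^{(c-2)/2}-8\bigr)$ for $c\equiv2\pmod4$; inserting these and $TG(c)$ from Proposition~\ref{totalgenus} into $TG^*=\tfrac12(TG+TG_A)$ produces the four stated formulas. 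The main obstacle is bookkeeping rather than conceptual: justifying that the division by $2$ in the amphichiral count is exact, and carefully tracking the parity split $c\equiv0$ versus $c\equiv2\pmod4$ and the half-integer exponents $2^{(c-2)/2}$ through the two applications of Lemma~\ref{lem23}.
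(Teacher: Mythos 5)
Your proposal is correct and follows essentially the same route as the paper: the paper's displayed double sum for $TG^*(2k)$ is exactly your decomposition $TG^*=\tfrac12\bigl(TG+TG_A\bigr)$, with the second (palindromic/amphichiral) term counted by $\binom{m-1}{l}\binom{(k+l-2)/2}{m-1}$ and evaluated via Lemmas \ref{lem22} and \ref{lem23}, and the odd case handled by $TG^*(2k+1)=\tfrac12 TG(2k+1)$. Your intermediate values of $TG_A$ check out against the stated formulas, so the remaining work is indeed only the boundary-term bookkeeping you flag.
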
 

\begin{proof}[Sketch of Proof] 
First, we consider the case that the crossing number is even. 
Similarly, we can count the number of all the $2$-bridge knots with respect to $\ell = 2 l, c = 2k$, and $m$. 
Then the total number of the genera of all the $2$-bridge knots with crossing number $c=2k$ is given by 
\begin{eqnarray*}
TG^*(c) 
&=& \sum_{l=0}^{k-1} \sum_{m=l+1}^{(k+l)/2}  \frac{m}{2} \binom{k+l-1}{2m-1} \binom{2m-1}{2l} \\
&& + \,  \sum_{\substack{l=0 \\ l + k \equiv 0 \pmod{2}}}^{k-1} \sum_{m=l+1}^{(k+l)/2} \frac{m}{2} \binom{(k+l-2)/2}{m-1} \binom{m-1}{l} \\
&=& \sum_{l=0}^{k-1} \sum_{m=l+1}^{(k+l)/2}  \frac{m}{2} \binom{k+l-1}{2l} \binom{k-l-1}{2m-2l-1} \\
&& + \,  \sum_{\substack{l=0 \\ l + k \equiv 0 \pmod{2}}}^{k-1} \sum_{m=l+1}^{(k+l)/2} \frac{m}{2} \binom{(k+l-2)/2}{l} \binom{(k-l-2)/2}{m-l-1}.
\end{eqnarray*}
Moreover we can simplify the above expression in a similar way and get the desired statement. 

Besides, it is easy to see that $TG^*(2k+1) = \frac{1}{2} TG(2k+1)$, 
since knots with odd crossing numbers are not amphicheiral knots. 
\end{proof}

Therefore we obtain the average of genus of $2$-bridge knots with respect to  crossing number up to mirror image, which is denoted by $\overline{g}^*(c)$. 

\begin{theorem}\label{mainthmuptomirror}
For $c \geq 3$, we have
\[
\overline{g}^*_{c} =
\left\{
\begin{array}{ll}
\displaystyle{\frac{1}{4} c + \frac{1}{12} +  \frac{2^{(c-4)/2} - 4}{3 \left(2^{c-1} + 2^{c/2} \right)}} & c \equiv 0 \pmod{4} 
\smallskip\\
\displaystyle{\frac{1}{4} c + \frac{1}{12} +  \frac{1}{3 \cdot 2^{(c-3)/2}}} & c \equiv 1 \pmod{4} 
\smallskip\\
\displaystyle{\frac{1}{4} c + \frac{1}{12} +  \frac{2^{(c-4)/2} + 3 c - 11}{12 \left(2^{c-3} + 2^{(c-4)/2} - 1\right)}} & c \equiv 2 \pmod{4} \smallskip\\
\displaystyle{\frac{1}{4} c + \frac{1}{12} +  \frac{2^{(c+1)/2} - 3 c + 11}{12 \left(2^{c-3} + 2^{(c-3)/2} + 1\right)}} & c \equiv 3 \pmod{4} 
\end{array}
\right. .
\]
In particular, $\overline{g}^*_c \sim \displaystyle{ \frac{1}{4} c + \frac{1}{12}}$ as $c \to \infty$. 
\end{theorem}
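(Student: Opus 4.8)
The plan is to compute $\overline{g}^*_c$ directly from its definition $\overline{g}^*_c = TG^*(c)/TK^*(c)$, exactly as was done for Theorem \ref{mainthm}: substitute the closed forms for $TK^*(c)$ from Theorem \ref{thmernstsumnersuptomirror} and for $TG^*(c)$ from Proposition \ref{totalgenusuptomirror}, then simplify each residue class modulo $4$ into the normalized shape $\frac{c}{4}+\frac{1}{12}+(\text{correction})$. Since all four formulas are already available, the entire argument is a bookkeeping computation with no new structural input.

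First I would dispose of the odd cases for free. As recorded in the sketch of Proposition \ref{totalgenusuptomirror}, knots with odd crossing number are chiral, so $TG^*(2k+1)=\tfrac12 TG(2k+1)$; the identical reasoning applied to the counts in Theorems \ref{thmernstsumners} and \ref{thmernstsumnersuptomirror} gives $TK^*(2k+1)=\tfrac12 TK(2k+1)$ (one checks directly that $\tfrac12\cdot\tfrac13(2^{c-2}+2^{(c-1)/2})=\tfrac13(2^{c-3}+2^{(c-3)/2})$ for $c\equiv1$, and similarly for $c\equiv3$). Consequently the factors of $\tfrac12$ cancel in the quotient and $\overline{g}^*_c=TG^*(c)/TK^*(c)=TG(c)/TK(c)=\overline{g}_c$ for odd $c$, which is precisely why the $c\equiv1\pmod4$ and $c\equiv3\pmod4$ rows of Theorem \ref{mainthmuptomirror} coincide verbatim with those of Theorem \ref{mainthm}. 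No fresh computation is needed there.

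The only genuine work is in the even cases $c\equiv0$ and $c\equiv2\pmod4$. In each I would clear the factor $\tfrac{1}{72}\big/\tfrac13=\tfrac1{24}$ and rewrite the denominator so that $24\,TK^*(c)=12\big(2^{c-2}+2^{(c-2)/2}\mp2\big)$, where the sign matches the $-1$ appearing (or not) in $TK^*$. The key manipulation, mirroring the proof of Theorem \ref{mainthm}, is then to split the numerator of $TG^*(c)$ as $(3c+1)$ times this denominator factor plus a remainder: writing $(3c+2)2^{(c-2)/2}=(3c+1)2^{(c-2)/2}+2^{(c-2)/2}$ peels off $(3c+1)\big(2^{c-2}+2^{(c-2)/2}\mp2\big)$ and leaves a remainder of the form $2^{(c-2)/2}+(\text{linear in }c)$. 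The $(3c+1)$-part yields $\frac{3c+1}{12}=\frac{c}{4}+\frac{1}{12}$, and after factoring a $2$ from both remainder and denominator the leftover fraction collapses to the stated correction term $\frac{2^{(c-4)/2}-4}{3(2^{c-1}+2^{c/2})}$ (case $c\equiv0$) and $\frac{2^{(c-4)/2}+3c-11}{12(2^{c-3}+2^{(c-4)/2}-1)}$ (case $c\equiv2$).

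The hard part will be nothing conceptual but purely the care required to track the additive constants ($-8$, $-24$) and the exponents $2^{(c-2)/2}$ versus $2^{(c-4)/2}$ through the decomposition so that each correction matches the target form exactly; this is the same mild hazard encountered in the even-crossing analysis of Proposition \ref{totalgenus}. Finally, the asymptotic claim $\overline{g}^*_c\sim\frac{c}{4}+\frac{1}{12}$ is immediate: in every case the correction term has a numerator of size $O(c+2^{c/2})$ against a denominator of size $\Theta(2^{c})$, so it tends to $0$ as $c\to\infty$.
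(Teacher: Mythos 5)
Your proposal is correct and follows exactly the route the paper (implicitly) takes: divide $TG^*(c)$ by $TK^*(c)$, observe that the odd cases reduce to Theorem \ref{mainthm} because both counts are halved, and carry out the $(3c+1)$-splitting of the numerator in the two even residue classes; I checked that your remainders $2^{(c-2)/2}-8$ and $2^{(c-2)/2}+6c-22$ over $12\bigl(2^{c-2}+2^{(c-2)/2}\bigr)$ and $12\bigl(2^{c-2}+2^{(c-2)/2}-2\bigr)$ do rescale to the stated correction terms. No gaps.
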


It is easy to see that $\overline{g}_c = \overline{g}^*_c$ for odd crossing numbers, 
since $TK^*(2k+1) = \frac{1}{2} TK(2k+1)$ and $TG^*(2k+1) = \frac{1}{2} TG(2k+1)$. 
As a final example, we show Table \ref{tablegc} on the values which are provided in this paper.  

\renewcommand{\arraystretch}{1.3}
\begin{table}[h]
\begin{tabular}{c||c|c|c|c|c|c|c|c|c|c|c|c|c}
$c$ & 3 & 4 & 5 & 6 & 7 & 8 & 9 & 10 & 11 & 12 & 13 & 14 & 15  \\
\hline
\hline
$TK(c)$ & 2 & 1 & 4 & 5 & 14 & 21 & 48 & 85 & 182 & 341 & 704 & 1365 & 2774  \\
\hline
$TG(c)$ & 2 & 1 & 6 & 8 & 26 & 44 & 114 & 220 & 518 & 1052 & 2354 & 4892 & 10646  \\
\hline
$\overline{g}_{c}$ & $1$ & $1$ & $\frac{3}{2}$ & $\frac{8}{5}$ & $\frac{13}{7}$ & $\frac{44}{21}$ 
& $\frac{19}{8}$ & $\frac{44}{17}$ & $\frac{37}{13}$ & $\frac{1052}{341}$ & $\frac{107}{32}$ & $\frac{4892}{1365}$ & $\frac{5323}{1387}$  \\
\hline
\hline
$TK^*(c)$ & 1 & 1 & 2 & 3 & 7 & 12 & 24 & 45 & 91 & 176 & 352 & 693 & 1387 \\
\hline
$TG^*(c)$ & 1 & 1 & 3 & 5 & 13 & 25 & 57 & 117 & 259 & 543 & 1177 & 2485 & 5323 \\
\hline
$\overline{g}^*_{c}$ & 1 & 1 & $\frac{3}{2}$ & $\frac{5}{3}$ & $\frac{13}{7}$ & $\frac{25}{12}$ 
& $\frac{19}{8}$ & $\frac{13}{5}$ & $\frac{37}{13}$ & $\frac{543}{176}$ & $\frac{107}{32}$ & $\frac{355}{99}$ & $\frac{5323}{1387}$ \\
\end{tabular}
\bigskip
\caption{$\overline{g}_{c}$ and $\overline{g}^*_{c}$ }
\label{tablegc}
\end{table}
\renewcommand{\arraystretch}{1}

\section*{Acknowledgments}
The first author is partially supported by KAKENHI grant No.\ 20K03596, 19H01785, and 21H00986 
from the Japan Society for the Promotion of Science.  The second author is supported by grants from the Simons Foundation (\#354595 and \#708778).

\end{document}